  
  \documentclass[12pt,reqno]{amsart}
  \usepackage{latexsym} 
  \usepackage[all]{xy}
  \usepackage{amsfonts} 
  \usepackage{amsthm} 
  \usepackage{amsmath} 
  \usepackage{amssymb}
  \usepackage{pifont}  
  \usepackage{enumerate}
  \xyoption{2cell}

 \usepackage{tikz}
\usetikzlibrary{arrows}


  \def\endproof{\hbox{$\sqcup$}\llap{\hbox{$\sqcap$}}\medskip} 
  \def\<{{\langle}} 
  \def\>{{\rangle}}

  \def\note#1{{}}

  \def\note#1{} 

  \def\cA{{\mathcal A}} 
  \def\cB{{\mathcal B}}

  \def\cO{{\mathcal O}}

   \def\ext#1#2#3#4{\mathrm{Ext}\sp{#1}\sb{#2}(#3,#4)}

  \def\tp{{\tilde{p}}}

  \def\beq{\begin{equation}} 
  \def\eeq{\end{equation}}

  \def\id{\mathrm{id}}

  \def\ot{{\otimes}}

 \def\htau{\hat{\tau}}

 \def\coker{\mathrm{coker}}


    \def \hp{\widehat{p}}

  \newcounter{zlist} 
  \newenvironment{zlist}{\begin{list}{(\arabic{zlist})}{ 
  \usecounter{zlist}\leftmargin2.5em\labelwidth2em\labelsep0.5em 
  \topsep0.6ex
  \parsep0.3ex plus0.2ex minus0.1ex}}{\end{list}}

  \newcounter{blist} 
  \newenvironment{blist}{\begin{list}{(\alph{blist})}{ 
  \usecounter{blist}\leftmargin2.5em\labelwidth2em\labelsep0.5em 
  \topsep0.6ex 
  \parsep0.3ex plus0.2ex minus0.1ex}}{\end{list}} 

  \newcounter{rlist}


\def\stac#1{\raise-.2cm\hbox{$\stackrel{\displaystyle\otimes}{\scriptscriptstyle{#1}}$}}

\def\cten#1{\raise-.2cm\hbox{$\stackrel{\displaystyle\widehat{\otimes}}
{\scriptscriptstyle{#1}}$}}

\textheight 22.5 cm
\textwidth 15.5cm
\topmargin -.25in \headheight 0.3in \headsep .5cm
\oddsidemargin .15in \evensidemargin .15in
\topskip 12pt

  \def\Label#1{\label{#1}\ifmmode\llap{[#1] }\else 
  \marginpar{\smash{\hbox{\tiny [#1]}}}\fi} 
  \def\Label{\label}

  \newtheorem{proposition}{Proposition}[section]
  \newtheorem{lemma}[proposition]{Lemma} 
  \newtheorem{corollary}[proposition]{Corollary} 
  \newtheorem{theorem}[proposition]{Theorem} 

\theoremstyle{definition} 
  \newtheorem{definition}[proposition]{Definition}
    
  \newtheorem{example}[proposition]{Example}

  \theoremstyle{remark} 
  \newtheorem{remark}[proposition]{Remark}

  \newcounter{c} 
   
  \newcommand{\etyk}[1]{\vspace{-7.4mm}$$\begin{equation}\Label{#1} 
  \addtocounter{c}{1}} 
  \renewcommand{\]}{\ifnum \value{c}=1 $$\else \end{equation}\fi} 
  \setcounter{tocdepth}{2}







\def\ot{\otimes}

\def\CC{{\mathbb C}}

\def\KK{{\mathbb K}}

\def\NN{{\mathbb N}}

\def\PP{{\mathbb P}}

\def\WW{{\mathbb W}}

\def\ZZ{{\mathbb Z}}

\newcommand{\Cc}{\mathcal{C}}

\def\*C{{}^*\hspace*{-1pt}{\Cc}}

\def\text#1{{\rm {\rm #1}}}



 \def\1{\mathbf{1}}

      \def\tr{\mathrm{Tr}\ }

\def\bomega{\bar{\omega}}
 \def\proofof#1{{\sl Proof~of~#1.~~}} 
 \def\Weyl#1#2#3{\cB(#1;#2,#3)}
  \def\AWeyl#1#2{\cA(#1;#2)}

\begin{document}

\title{Circle and line bundles over generalized Weyl algebras}

\author{Tomasz Brzezi\'nski}
 \address{ Department of Mathematics, Swansea University, 
  Swansea SA2 8PP, U.K.} 
  \email{T.Brzezinski@swansea.ac.uk}   
 \subjclass[2010]{16S38; 58B32; 58B34} 
 \keywords{Generalized Weyl algebra; principal comodule algebra; strongly graded algebra; projectively graded algebra}
 
\begin{abstract}
Strongly $\mathbb{Z}$-graded algebras or principal circle bundles and associated line bundles or invertible bimodules over a class of generalized Weyl algebras $\Weyl p q 0$ (over a ring of polynomials in one variable) are constructed. The Chern-Connes pairing between the cyclic cohomology of $\Weyl p q 0$ and the isomorphism classes of  sections of associated line bundles over $\Weyl p q 0$ is computed, thus demonstrating that these bundles, which are labelled by integers, are non-trivial and mutually non-isomorphic. The constructed strongly $\mathbb{Z}$-graded algebras are shown to have Hochschild cohomology reminiscent of  that of Calabi-Yau algebras. The paper is supplemented by an observation that a grading by an Abelian group in the middle of a short exact sequence is strong if and only if the induced gradings by the outer groups in the sequence are strong.
\end{abstract}
\maketitle

\section{Introduction}
This paper is devoted to the study of some aspects of the non-commutative geometry of degree-one {\em generalized Weyl algebras} \cite{Bav:gweyl} or {\em rank-one hyperbolic algebras} \cite{LunRos:Kas} over the polynomial ring in one variable (see Section~\ref{sec.pre.B}). These algebras can be interpreted as coordinate algebras of non-commutative surfaces, which are smooth  if the defining polynomial has no repeated roots. To be specific and fix the notation, we consider algebras $\Weyl p q r$ over a field $\KK$ of characteristic zero, labelled by $q,r\in \KK$, $q\neq 0$ and a non-zero polynomial $p$ in the variable $z$. The algebras $\Weyl p q r$ are  generated by $x,y,z$ subject to the relations
\begin{equation}\label{algebra}
xy = p(qz +r), \quad yx = p(z), \quad xz = (qz+r)x, \quad yz = q^{-1}(z -r)y.
\end{equation}
In the terminology of Bavula \cite{Bav:gweyl}, these are (all) degree-one generalized Weyl algebras over the ring $\KK[z]$. The polynomial $p$ is an element of $\KK[z]$ and $\sigma(z) = qz +r$ is an automorphism of $\KK[z]$, which form the defining data  a generalized Weyl algebra. The algebras $\Weyl p q r$ include examples of continuing and new interest in  non-commutative geometry such as quantum spheres \cite{Pod:sph}, non-commutative deformations of type-A Kleinian singularities \cite{Hod:def} and quantum weighted projective spaces \cite{BrzFai:tea}, \cite{Brz:Sei}.

The  main results of this article can be summarized as follows. When $r=0$ and 0 is a root of $p$  with multiplicity $k$, for each factorization $q = q_+q_-$ we construct a {\em strongly $\ZZ$-graded algebra}  \cite{NasVan:gra} (or a {\em principal $\KK\ZZ$-comodule algebra} \cite{BrzHaj:Che} or a  {\em quantum  principal circle bundle}) $\AWeyl p {q_\pm}^{(k)}$ with $\Weyl p q 0$ as its degree-zero part (see Theorem~\ref{thm.main}). The algebra $\AWeyl p {q_\pm}^{(k)}$ is the $k$-th Veronese subalgebra of a specific $\ZZ$-graded generalized Weyl algebra over the polynomial ring in two variables. As for every strongly $\ZZ$-graded algebra, the degree $n$ component ${\AWeyl p {q_\pm}^{(k)}}_{n}$ of $\AWeyl p {q_\pm}^{(k)}$ is a finitely generated projective  invertible bimodule over $\Weyl p q 0$, so it plays the role of sections of a line bundle over the non-commutative space represented by $\Weyl p q 0$. The idempotents for these modules can be explicitly described. We show that if 0 is a root of $p$ and $q$ is not a root of unity,  a  cyclic trace on $\Weyl p q r$ can be assigned to every non-zero root of $p$. Combining such a trace with the idempotents constructed earlier, the Chern-Connes pairing \cite{Con:non}  can be computed thus establishing that if $p$ has a root 0 (of multiplicity $k$) and another root, $r=0$ and $q$ is not a root of unity, then the Chern number of ${\AWeyl p {q_\pm}^{(k)}}_n$ is $-n$. Consequently, the $\Weyl p q 0$-modules ${\AWeyl p {q_\pm}^{(k)}}_n$ and ${\AWeyl p{ q_\pm,}^{(k)}}_m$ are not  isomorphic to each other if $m\neq n$ (see Theorem~\ref{thm.main}). This makes  ${\AWeyl p {q_\pm}^{(k)}}$ an example of what we term a {\em non-degenerate projectively graded algebra}. Finally, we look at homological properties of $\AWeyl p {q_\pm} ^{(k)}$. Exploring the results of \cite{Liu:hom} and employing methods of \cite{LiuWan:twi}, we calculate the Hochschild cohomology of $\AWeyl p {q_\pm}^{(1)}$ with values in its enveloping algebra, in the case in which $p$ has  no repeated roots. This turns out to be trivial except in degree 3, where it is isomorphic to $\AWeyl p {q_\pm}^{(1)}$ as a bimodule twisted by an algebra endomorphism of  $\AWeyl p {q_\pm}^{(1)}$ (see Proposition~\ref{prop.sus.a}). 

The paper is supplemented by an appendix in which it is shown that given an exact sequence of Abelian groups and an algebra graded by the middle group in the sequence, the grading is strong if and only if the induced gradings by the outer groups in the sequence are strong.

\section{Preliminaries}\label{sec.pre}\setcounter{equation}{0}
\subsection{Conventions and notation.} \label{sec.not}
All algebras considered in this paper are associative, unital algebras over a field $\KK$ of characteristic 0. Unadorned tensor products are over $\KK$.  The identity of a $\KK$-algebra $\cA$ is denoted by 1, while $\cA^{op}$ denotes the algebra with the multiplication  opposite to that of $\cA$, and $\cA^e = \cA\ot \cA^{op}$ is the enveloping algebra of $\cA$.

Given algebra endomorphisms $\mu,\nu$ of $\cA$ and an $\cA$-bimodule $M$, we write ${}^\mu\! M^\nu$ for the $\cA$-bimodule that is isomorphic to $M$ as a vector space, but has $\cA$-multiplications twisted by $\mu$ and $\nu$, i.e.\
\begin{equation}\label{twist}
a\cdot m \cdot b = \mu(a)m\nu(b), \qquad \mbox{for all} \; a,b\in \cA,\, m\in M.
\end{equation}
Note that if $\mu$ is an automorphism, then ${}^\mu\! A^\nu \cong A^{\nu\circ \mu^{-1}}$ as bimodules.

Given an automorphism $\sigma$ of an algebra $\cA$, the skew polynomial ring over $\cA$ in an indeterminate $z$ is denoted by $\cA[z;\sigma]$. This consists of all polynomials in $z$ with coefficients from $\cA$, subject to the relation $az = z\sigma(a)$, for all $a\in \cA$.

\subsection{$\ZZ$-graded algebras and non-commutative circle and line bundles.} \label{sec.pre.line}
A {\em $\ZZ$-graded algebra} is an algebra $\cA$ which decomposes into a direct sum of vector spaces $\cA = \oplus_{n\in \ZZ} \cA_n$ such that $\cA_m\cA_n \subseteq \cA_{m+n}$, for all $m,n\in \ZZ$. It follows immediately that $\cA_0$ is a subalgebra of $\cA$ and each of the $\cA_n$ is an $\cA_0$-bimodule by restriction of multiplication of $\cA$. A $\ZZ$-graded algebra $\cA$ is said to be {\em strongly graded}, if $\cA_m\cA_n = \cA_{m+n}$. As explained in  \cite[Section~AI.3.2]{NasVan:gra} a $\ZZ$-graded algebra $\cA$ is strongly graded if and only if there exist $\omega = \sum_i \omega_i'\ot \omega_i'' \in \cA_{-1}\ot \cA_1$ and $\bomega = \sum_i \bomega_i'\ot \bomega_i'' \in \cA_{1}\ot \cA_{-1}$ such that
\begin{equation}\label{conn.0}
\sum_i \omega_i'\omega_i'' = \sum_i \bomega_i'\bomega_i'' =1.
\end{equation}
Starting with such $\omega$, $\bomega$ one constructs inductively elements  
$\omega(n) \in \cA_{-n}\ot \cA_n$ as
\begin{equation}\label{conn.n}
\omega(0) = 1\ot 1, \qquad \omega(n) = \begin{cases}
\sum_i \omega_i'\omega(n-1) \omega_i''  & \quad \mbox{if $n>0$}, \\
\sum_i \bomega_i'\bomega(n+1) \bomega_i''  & \quad \mbox{if $n<0$} .
\end{cases}
\end{equation}
Due to \eqref{conn.0}, the evaluation of the multiplication on these elements yields the identity element of $\cA$. This implies that $\cA_0 = \cA_{n}\cA_{-n}$, for all $n$, and consequently $\cA_{m+n} = \cA_{m}\cA_{n}$ as required. 

The equality $\cA_0 = \cA_{n}\cA_{-n}$ translates into an $\cA_0$-bimodule isomorphism $\cA_0 \cong  \cA_{n}\ot_{\cA_0}\cA_{-n}$, therefore each of the $\cA_{n}$ is an invertible $\cA_{0}$-bimodule (with inverse $\cA_{-n}$), and hence in particular it is finitely generated and projective as a left and right $\cA_{0}$-module. More precisely, for the  $\omega(n)$ given by \eqref{conn.n}, let us write $\omega(n) =  \sum_{i=1}^N \omega'(n)_i \ot \omega''(n)_i$ and form an $N\times N$-matrix with entries 
\begin{equation} \label{idem}
E(n)_{ij} = \omega''(n)_i\omega'(n)_j.
\end{equation} 
 Since $\omega''(n)_i\in \cA_{n}$ and $\omega'(n)_j\in \cA_{-n}$, all  the  $E(n)_{ij}$ are degree-zero elements. Furthermore, \eqref{conn.0} guarantees that $E$ is an idempotent in the ring of $N\times N$-matrixes with entries from $\cA_0$. The left $\cA_0$-module isomorphism $\cA_n \to {\cA_0}^N E(n)$, where $N$ is the size of $E(n)$, is given by $a\mapsto (\sum_i a\omega'(n)_i E(n)_{ij})_j$. 
 
 Note in passing that due to the iterative definition of the $\omega(n)$, equations \eqref{conn.n} are equivalent to
 \begin{equation}\label{conn.n1}
\omega(0) = 1\ot 1, \qquad \omega(n) = \begin{cases}
\sum_i \omega(n-1)_i'\omega \omega_i(n-1)''  & \quad \mbox{if $n>0$}, \\
\sum_i \omega_i(n+1)'\bomega \omega_i(n+1)''  & \quad \mbox{if $n<0$} .
\end{cases}
\end{equation}
 
Strongly $\ZZ$-graded algebras are examples of {\em principal comodule algebras}, which in the context of non-commutative geometry play the role of coordinate algebras of principal fibre bundles \cite{BrzMaj:gau}, \cite{BrzHaj:Che}. In the $\ZZ$-graded case, the coacting Hopf algebra or the fibre is the group algebra of $\ZZ$ or, equivalently, coordinate algebra of the circle, hence strongly $\ZZ$-graded algebras represent circle principal bundles in non-commutative geometry. The elements $\omega(n)$ combine into a function from the group algebra of $\ZZ$ to $\cA\ot\cA$ which in the geometric context is interpreted as a {\em (strong) connection form} \cite{Haj:str}, \cite{DabGro:str}. In a general principal comodule algebra the existence of a (strong) connection form ensures that associated bundles are finitely generated projective modules with idempotents which in the circle bundle case take the form \eqref{idem}; see \cite{BrzHaj:Che}. Also in this case, bundles associated to one-dimensional representations of the circle group coincide with the $\cA_n$, and since the latter are invertible bimodules they can be given a genuine interpretation of sections of line bundles \cite{BegBrz:lin}.

 \subsection{The algebras $\Weyl p q r$ and other generalized Weyl algebras.}\label{sec.pre.B}
 Let $\cA$ be an algebra, $\sigma$ an automorphism of $\cA$ and $p$ an element of the centre of $\cA$. A {\em degree-one generalized Weyl algebra over $\cA$} is an algebraic extension $\cA(p,\sigma)$ of $\cA$ obtained by supplementing $\cA$ with additional generators $x,y$ subject to the following relations
 \begin{equation}\label{gW}
 xy = \sigma(p), \quad yx = p, \quad xa = \sigma(a)x, \quad ya = \sigma^{-1}(a)y;
 \end{equation}
see \cite{Bav:gweyl}. The algebras $\cA(p,\sigma)$ share many properties with $\cA$, in particular, if $\cA$ is a Noetherian algebra, so is $\cA(p,\sigma)$, and  if $\cA$ has no zero-divisors and $p\neq 0$, $\cA(p,\sigma)$ does not have zero-divisors either; see \cite{Bav:gweyl}. 

If $\cA = \KK[z]$, then every automorphism $\sigma$ of $\cA$ necessarily takes the form $\sigma(z) = qz +r$, for $q,r\in \KK$, $q\neq 0$. Hence any generalized Weyl algebra over  $\KK[z]$  coincides with an algebra $\Weyl p q r$ described in Introduction. A basis for $\Weyl p q r$  is given by monomials $x^kz^l$ and $y^kz^l$, $k,l\in \NN$. The Hochschild cohomology of the algebras $\Weyl p q r$ was computed in \cite{FarSol:Hoc}, \cite{SolSua:Hoc}, while  in  \cite{Liu:hom} it has been established that all the $\Weyl p q r$ are twisted Calabi-Yau algebras provided $p$ has no repeated roots. 

In the complex case $\KK=\CC$, $\Weyl p q r$ can be made into $*$-algebras by setting
$z^* = z$, $y^*=x$ as long as $q, r$ are real and $p$ has real coefficients. In the case $q \neq 1$ and $p(\frac{r}{1-q}) \geq 0$, $\Weyl p q r$ has one-dimensional $*$-representations $\pi_\lambda$, labelled by numbers $\lambda$ of modulus one, and given by 
$$
\pi_\lambda (z) = \frac{r}{1-q}, \qquad \pi_\lambda (x) = \lambda \sqrt{p\left(\frac{r}{1-q}\right)}.
$$
 Furthermore, if $q\in (0,1)$ and $r=0$, every real root  $\zeta$ of $p$ such that 
 $p(q^k\zeta)>0$, for all positive integers $k$,  defines an infinite-dimensional bounded $*$-representation $\pi_\zeta$ on the Hilbert space with orthonormal basis $e_k$, $k\in \NN$,
 $$
 \pi_\zeta(z) e_k = q^k\zeta e_k, \qquad \pi_\zeta(x) e_k = \sqrt{p\left(q^k\zeta\right)} e_{k-1}.
 $$
Quantum spheres \cite{Pod:sph} and quantum weighted projective spaces \cite{BrzFai:tea}, \cite{Brz:Sei} are examples of the $*$-algebras $\Weyl p q r$.

\section{Non-degenerate projectively graded algebras}\label{sec.proj.grad}\setcounter{equation}{0}
In this section we construct $\ZZ$-graded algebras, whose degree-zero part coincides with $\Weyl p q r$, and which 
fall within a specific category of graded algebras: 

\begin{definition}\label{def.proj.gra} 
~

\begin{zlist}
\item A $\ZZ$-graded algebra $\cA = \oplus_{n\in \ZZ} \cA_n$ is said to be {\em projectively graded} if, for all $n\in \ZZ$, $\cA_n$ is a projective left $\cA_0$-module.
\item A projectively graded algebra $\cA = \oplus_{n\in \ZZ} \cA_n$ is said to be {\em non-degenerate}, if 
\begin{blist}
\item for all $n\in \ZZ\setminus\{0\}$, $\cA_n$ is a finitely generated  left $\cA_0$-module;
\item for all $m,n\in \ZZ$, if $m\neq n$, then $\cA_m\not\cong \cA_n$ as left $\cA_0$-modules.
\end{blist}
\end{zlist}
\end{definition}

A strongly $\ZZ$-graded algebra $\cA$ is projectively graded but it is not necessarily non-degenerate (despite the fact that all the $\cA_n$ are finitely generated). There is an easy test which allows one to detect the failure of satisfying condition (2)(a) in Definition~\ref{def.proj.gra}.

\begin{lemma}\label{lem.trivial}
In a strongly $\ZZ$-graded algebra $\cA$, $\cA_n\cong \cA_0$ as left (resp.\ right) $\cA_0$-modules if and only if there exists a unit $u\in \cA_n$.
\end{lemma}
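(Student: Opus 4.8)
The plan is to prove the two implications separately, treating only the left-module case in detail since the right-module case is entirely symmetric. The substantive content is in the direction ``isomorphism $\Rightarrow$ unit''; the converse is essentially a change of variables.

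For ``unit $\Rightarrow$ isomorphism'': assume $u\in\cA_n$ is invertible in $\cA$. First I would check that $u^{-1}$ is automatically homogeneous of degree $-n$: decomposing $u^{-1}=\sum_k w_k$ into homogeneous components and reading off the degree-$m$ parts of $uu^{-1}=1$ and $u^{-1}u=1$ (which are equations in $\cA_0$), every component with $k\neq -n$ must vanish, leaving $u w_{-n}=w_{-n}u=1$ with $w_{-n}\in\cA_{-n}$. Then right multiplication $a\mapsto au$ is a left $\cA_0$-linear map $\cA_0\to\cA_n$ whose two-sided inverse is $b\mapsto b u^{-1}$ (well defined, since $\cA_n\cA_{-n}\subseteq\cA_0$), so $\cA_n\cong\cA_0$ as left $\cA_0$-modules. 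Left multiplication by $u$ handles the right-module statement.

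For ``isomorphism $\Rightarrow$ unit'': given a left $\cA_0$-linear isomorphism $\phi\colon\cA_0\to\cA_n$, set $u:=\phi(1)\in\cA_n$; then $\phi(a)=au$ for all $a\in\cA_0$, so surjectivity says $\cA_0 u=\cA_n$ while injectivity says $au=0\Rightarrow a=0$ for $a\in\cA_0$. Using that $\cA$ is strongly graded, $\cA_0=\cA_{-n}\cA_n=\cA_{-n}(\cA_0 u)=\cA_{-n}u$, so there exists $v\in\cA_{-n}$ with $vu=1$, i.e.\ a one-sided inverse. To upgrade this to a genuine inverse I would consider $e:=uv\in\cA_0$, observe it is idempotent ($e^2=u(vu)v=uv=e$) and satisfies $eu=u(vu)=u$, hence $(e-1)u=0$ with $e-1\in\cA_0$; injectivity of $\phi$ then forces $e=1$, whence $uv=vu=1$ and $u$ is a unit (with inverse lying in $\cA_{-n}$).

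The one spot requiring thought — the main obstacle, such as it is — is obtaining a \emph{two-sided} inverse: strong gradedness only delivers a one-sided inverse $v$, and the point is to feed the idempotent $uv$ back through the injectivity hypothesis to eliminate it. It is worth emphasising that the hypothesis is merely a left $\cA_0$-module isomorphism, with no reference to the grading, so the homogeneity of the inverse must be extracted rather than assumed; the argument above does exactly this, both in producing $v\in\cA_{-n}$ and in the ``unit $\Rightarrow$ isomorphism'' direction.
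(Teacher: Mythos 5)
Your argument is correct and essentially the paper's: in the substantive direction both proofs use strong gradedness to write $1=\sum_i v_iu_i$ with $v_i\in\cA_{-n}$, $u_i\in\cA_n$, and then feed this through the left $\cA_0$-module isomorphism to produce an inverse of $u=\phi(1)$ lying in $\cA_{-n}$ (your one-sided inverse $v$ is in substance the paper's element $\sum_i v_i\phi(u_i)$, up to the opposite convention for the direction of $\phi$). The only difference is the last step: the paper verifies that this element is a two-sided inverse directly from $\cA_0$-linearity, whereas you obtain $uv=1$ from the idempotent $e=uv$ together with injectivity of $\phi$ --- an equally valid one-line finish.
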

\begin{proof}
If $u$ is a unit in $\cA_n$, then $u^{-1}\in \cA_{-n}$, since $1\in \cA_0 = \cA_n \cA_{-n}$. This allows one to define mutually inverse isomorphisms of left $\cA_0$-modules by
$$
\phi: \cA_n \to \cA_0, \quad a\mapsto au^{-1}, \qquad \phi^{-1}: \cA_0 \to \cA_n, \quad b\mapsto bu.
$$

Conversely, given an isomorphism $\phi: \cA_n\to \cA_0$, let  $u = \phi^{-1}(1) \in \cA_n$. Since  $\cA$ is a strongly $\ZZ$-graded algebra, there exist elements $v_i\in \cA_{-n}$ and $u_i\in \cA_{n}$, such that $\sum_i v_iu_i =1$. Then, using the $\cA_0$-linearity of $\phi$ and the definition of $u$, one easily finds that 
$
u^{-1} := \sum_i v_i\phi(u_i)
$
is the inverse of $u$.
\end{proof}

\begin{corollary}\label{cor.trivial}
In a strongly $\ZZ$-graded algebra $\cA$ in which $\cA_0$ has the invariant basis number property, $\cA_n$ is a free $\cA_0$-module if and only if there is  a unit in $\cA_n$.
\end{corollary}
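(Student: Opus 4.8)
The statement follows by combining Lemma \ref{lem.trivial} with the definition of the invariant basis number (IBN) property. The plan is to prove each implication of the biconditional separately, using the previous lemma for one direction and an elementary rank-counting argument for the other.

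\emph{The easy direction.} Suppose there is a unit $u\in \cA_n$. By Lemma \ref{lem.trivial}, $\cA_n\cong \cA_0$ as left $\cA_0$-modules, and $\cA_0$ is free of rank one over itself; hence $\cA_n$ is a free $\cA_0$-module. This direction does not use the IBN hypothesis at all.

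\emph{The converse.} Suppose $\cA_n$ is a free left $\cA_0$-module. Since $\cA$ is strongly $\ZZ$-graded, $\cA_n$ is finitely generated and projective as a left $\cA_0$-module (as recalled in Section \ref{sec.pre.line}); being free, it is therefore free of some finite rank $m$, so $\cA_n\cong {\cA_0}^{\,m}$ as left $\cA_0$-modules. Likewise $\cA_{-n}$ is free of some finite rank $m'$, so $\cA_{-n}\cong {\cA_0}^{\,m'}$. Using the $\cA_0$-bimodule isomorphism $\cA_0\cong \cA_n\ot_{\cA_0}\cA_{-n}$ coming from the strong grading (the case $\cA_0 = \cA_n\cA_{-n}$ of \eqref{conn.0}--\eqref{conn.n}), we get ${\cA_0}\cong {\cA_0}^{\,mm'}$ as left $\cA_0$-modules. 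The IBN property of $\cA_0$ now forces $mm' = 1$, hence $m = m' = 1$. Thus $\cA_n\cong \cA_0$ as left $\cA_0$-modules, and Lemma \ref{lem.trivial} produces a unit in $\cA_n$.

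\emph{Remarks on the argument.} The only subtlety is that one must first invoke the general fact (established in Section \ref{sec.pre.line}) that the homogeneous components of a strongly graded algebra are finitely generated projective modules, so that "free" automatically means "free of finite rank" and the tensor-product identity $\cA_n\ot_{\cA_0}\cA_{-n}\cong\cA_0$ interacts well with taking ranks; after that the rank computation $mm'=1$ and the appeal to IBN are routine. The statement for right modules is proved identically, using the right-module versions of Lemma \ref{lem.trivial} and of the isomorphism $\cA_0\cong \cA_{-n}\ot_{\cA_0}\cA_n$; I would simply note that the two cases are symmetric rather than writing out both. No step here is a genuine obstacle — it is a short deduction from the preceding lemma.
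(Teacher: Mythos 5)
Your argument is essentially the paper's own proof: the unit direction is immediate from Lemma~\ref{lem.trivial}, and the converse derives $\cA_0\cong{\cA_0}^{mm'}$ from the invertibility of $\cA_n$ and then invokes the IBN property to force $m=m'=1$. The one word to tighten is ``Likewise'': freeness of $\cA_{-n}$ is not a hypothesis and does not follow ``likewise'' from that of $\cA_n$; as in the paper, it has to be extracted from the fact that $\cA_{-n}$ is the \emph{inverse} bimodule of the free module $\cA_n$, so you should state that justification explicitly. Otherwise the two proofs coincide (your exponent ${\cA_0}^{m}\ot_{\cA_0}{\cA_0}^{m'}\cong{\cA_0}^{mm'}$ is in fact the correct one, where the paper writes $k+l-1$; either way the ranks are forced to equal $1$).
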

\begin{proof}
If $\cA_n \cong {\cA_0}^k$, then, since $\cA_{-n}$ is its inverse, also  $\cA_{-n} \cong  {\cA_0}^l$, for some $l$. Therefore,
$$
\cA_0 \cong {\cA_0}^k\ot_{\cA_0}{\cA_0}^l \cong {\cA_0}^{k +l-1},
$$
hence $k=l=1$ by the IBN property, and the assertion follows by Lemma~\ref{lem.trivial}.
\end{proof}

One way of proving that a strongly graded $\ZZ$-algebra $\cA$ is a non-degenerate projectively graded algebra is to use the Chern-Connes pairing between the cyclic cohomology and the $K$-theory of $\cA_0$. This method allows one not only to determine whether the components of $\cA$ are non-free but also to establish that they are not mutually isomorphic.  It was successfully applied recently in \cite{BrzFai:Hee} for algebras arising as total spaces of circle bundles over Heegaard quantum weighted projective lines, and earlier in \cite{Haj:bun} to prove that the coordinate algebra of the quantum group $SU_q(2)$ is a non-degenerate projectively graded algebra over the coordinate algebra of the quantum standard sphere $S^2_q$, and a similar statement for mirror quantum spheres \cite{HajMat:ind}. The method is based on evaluating a cyclic trace on $\cA_0$ at traces of idempotents for the $\cA_n$. As recalled in Section~\ref{sec.pre.line} there is an explicit formula for such idempotents. The construction of a suitable cyclic trace, i.e.\ one that detects differences between the  $\cA_n$, depends more heavily on the structure of $\cA_0$, and we will present it for the generalized Weyl algebras $\Weyl p q r$ described in the Introduction.

Consider the generalized Weyl algebra $\Weyl p q r$ given by generators and relations \eqref{algebra}, and assume that $q$ is not a root of unity. Let $\zeta$ be a root of $p$. Let us define  $\KK$-linear maps  $\htau_\zeta: \KK[z] \to \KK$ and $\tau_\zeta: \Weyl p q r \to \KK$, by
\begin{equation}\label{htau}
\htau_\zeta(z^n) = \frac{1}{1-q^n} \sum_{i=1}^n t^n_i\zeta^i, \quad \mbox{where} \quad t^n_n =1, \quad t^n_{n-k} = \sum_{i=1}^k\binom{n}{i} \frac{r^iq^{n-i}}{1-q^{n-i}}t^{n-i}_{n-k},
\end{equation}
$k=1,\ldots, n-1$, and 
\begin{equation}\label{tau0}
\tau_\zeta (x^mz^n) = \tau_\zeta (y^mz^n) = \begin{cases}
\htau_\zeta(z^n) &  \mbox{if $m=0$, $n\neq 0$}, \\
0  &  \mbox{otherwise} .
\end{cases} 
\end{equation}

\begin{lemma} \label{lemma.trace}
Let $p$ be a polynomial with coefficients in $\KK$ and with root $0$ and let $q$ be an element of $\KK$ that is not a root  of unity. Then, for any root $\zeta$ of $p$, the map $\tau_\zeta$ given by \eqref{tau} is a trace (cyclic cocycle) on $\Weyl p q r$. 
\end{lemma}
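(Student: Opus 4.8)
The plan is to verify that $\tau_\zeta$ annihilates every commutator, which is exactly the condition for it to be a trace (cyclic $0$-cocycle). Since $\tau_\zeta$ is $\KK$-linear and $[\Weyl p q r,\Weyl p q r]$ is spanned by the brackets $[u,v]$ of the basis monomials $u,v\in\{z^n\ (n\geq0),\ x^mz^n,\ y^mz^n\ (m\geq1,\ n\geq0)\}$, it suffices to check $\tau_\zeta(uv)=\tau_\zeta(vu)$ for such pairs. First I would record the straightening rules coming from \eqref{algebra}: writing $\sigma(z)=qz+r$, one has $z^nx^m=x^m\sigma^{-m}(z^n)$, $z^ny^m=y^m\sigma^m(z^n)$, $x^my^m=\prod_{j=1}^m p(\sigma^j(z))$ and $y^mx^m=\prod_{j=0}^{m-1}p(\sigma^{-j}(z))$, which also give $x^my^k$ and $y^mx^k$ for $m\neq k$ as a polynomial in $z$ times $x^{|m-k|}$ or $y^{|m-k|}$.

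Confronting these normal forms with the definition \eqref{tau0} of $\tau_\zeta$ — which is supported on $z\KK[z]$, i.e.\ it kills $1$ and every monomial $x^mz^n,\ y^mz^n$ with $m\geq1$ — one checks that $\tau_\zeta(uv)=\tau_\zeta(vu)$ holds automatically (both sides vanish, or $uv=vu$ is a polynomial in $z$) in every case except $\{u,v\}=\{x^mz^s,\,y^mz^t\}$ with $m\geq1$. In that case the straightening rules give $x^mz^sy^mz^t=h(z)$ and $y^mz^tx^mz^s=h'(z)$ with
\[
h(z)=\Big(\prod_{j=1}^m p(\sigma^j(z))\Big)\sigma^m(z^s)\,z^t,\qquad
h'(z)=\Big(\prod_{j=0}^{m-1}p(\sigma^{-j}(z))\Big)\sigma^{-m}(z^t)\,z^s,
\]
so the trace property reduces to $\htau_\zeta(h)=\htau_\zeta(h')$, where $\htau_\zeta$ is extended to $\KK[z]$ by $\htau_\zeta(1):=0$; moreover a direct substitution shows $h=\sigma^m(h')$.

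The engine is then the identity
\[
\htau_\zeta(\sigma(f))-\htau_\zeta(f)=f(0)-f(\zeta),\qquad f\in\KK[z],
\]
which I would prove for $f=z^n$ by expanding $\sigma(z^n)=(qz+r)^n=\sum_{i}\binom{n}{i}q^ir^{n-i}z^i$ and comparing coefficients of $\zeta^l$: the coefficient of $\zeta^n$ collapses to $-1$, while the vanishing of the coefficient of $\zeta^l$ for $l<n$ is precisely the recursion defining $t^n_l$ in \eqref{htau} after the reindexing $i\mapsto n-i$. Iterating yields $\htau_\zeta(\sigma^m(f))-\htau_\zeta(f)=\sum_{i=0}^{m-1}\big(f(\sigma^i(0))-f(\sigma^i(\zeta))\big)$. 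Applying this with $f=h'$ and using that the factor $\prod_{j=0}^{m-1}p(\sigma^{-j}(z))$ of $h'$ vanishes at each $\sigma^i(0)$ (because $p(0)=0$) and at each $\sigma^i(\zeta)$ (because $p(\zeta)=0$) for $i=0,\dots,m-1$, every summand vanishes, hence $\htau_\zeta(h)=\htau_\zeta(\sigma^m(h'))=\htau_\zeta(h')$ and the lemma follows.

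I expect the lengthiest — but routine — part to be the case analysis for $[u,v]$ in the second paragraph, verifying that only $\{x^mz^s,y^mz^t\}$ is not automatic. The genuine computational heart is the displayed identity for $\htau_\zeta$: the task is to recognize that the otherwise opaque recursion for the $t^n_i$ is exactly what forces $\htau_\zeta\circ(\sigma-\id)=\ev_0-\ev_\zeta$ on $\KK[z]$ (equivalently, that $\htau_\zeta$ is the unique functional with this property normalized by $\htau_\zeta(1)=0$, which is well defined because $\sigma-\id$ is bijective on $z\KK[z]$ and $\ev_0-\ev_\zeta$ kills the constants). Once this is in hand, the hypotheses that $0$ and $\zeta$ are roots of $p$ enter only through the two vanishing statements above, and the rest is mechanical.
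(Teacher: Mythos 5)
Your proposal is correct and follows essentially the same route as the paper: the engine in both is the identity $\htau_\zeta(f)-\htau_\zeta(f\circ\sigma)=f(\zeta)-f(0)$ (forced by the recursion for the $t^n_i$), iterated $m$ times and applied to $y^mx^m$-type products whose $\sigma$-shifts vanish at both roots $0$ and $\zeta$ of $p$. You merely carry out in full the case analysis over basis monomials (and the uniform treatment of $\zeta=0$) that the paper leaves implicit after checking $x^ny^n$ against $y^nx^n$.
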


\begin{proof}
Since $\tau_0$ is the zero map, it is a trivial trace on $\Weyl p q r$. Let $\zeta$ be a non-zero root of $p$. By the definition of $\tau_\zeta$,
$$
\tau_\zeta (xz) = 0 = \tau_\zeta(zx), \qquad \tau_\zeta (yz) = 0 = \tau_\zeta(zy).
$$

Comparing respective powers of $\zeta$, one easily finds that
$$
\htau_\zeta(z^n) - \htau_\zeta((qz+r)^n) = \zeta^n, 
$$
therefore, for all polynomials $f\in \KK[z]$,
\begin{equation}\label{poly}
\htau_\zeta(f(z)) - \htau_\zeta(f(qz+ r)) = f(\zeta) -f(0).
\end{equation}
Next, let us consider the algebra automorphism $\sigma: \KK[z]\to \KK[z]$, $z\mapsto qz+r$, and define, for all $n$,
\begin{equation}\label{sn}
s_n(z) = \prod_{m=0}^{n-1} \sigma^{-m}(p(z)). 
\end{equation}
The relations \eqref{algebra} (cf.\ \eqref{gW}) imply that, for all $n\in \NN$,
$y^nx^n = s_n(z)$  and $x^ny^n = \sigma^n(s_n(z))$.
Since, for all $l=0,\ldots ,n-1$, the polynomial $\sigma^l\left({s}_n(z)\right)$ has a root $\zeta$,
the definition of $\tau_\zeta$ and equation \eqref{poly} yield
$$
\tau_\zeta (x^ny^n) = \htau_\zeta\left(\sigma^n\left({s}_n(z)\right) \right)
 = 
\htau_\zeta\left(\sigma^{n-1}\left({s}_n(z)\right) \right)
= \ldots = \htau_\zeta ({s}_n(z)) =  \tau_\zeta (y^nx^n).
$$
Hence $\tau_\zeta$ defined in \eqref{tau0} has the property  $\tau_\zeta(ab) = \tau_\zeta (ba)$, for all $a,b\in \Weyl p q r$, i.e.\ it is a cyclic cocycle (trace) on $\Weyl p q r$, as required.
\end{proof}

\begin{corollary} \label{cor.trace}
For $\Weyl p q 0$ in which  $q$ is not a root of unity, $p(0) = p(\zeta)=0$, $\zeta \neq 0$, the trace $\tau_\zeta$ \eqref{tau0} is explicitly given by
\begin{equation} \label{tau}
\tau_\zeta (x^mz^n) = \tau_\zeta (y^mz^n) =  \begin{cases}
\frac{\zeta^n}{1-q^n} &  \mbox{if $m=0$, $n\neq 0$}, \\
0  &  \mbox{otherwise} ,
\end{cases} 
\end{equation}
\end{corollary}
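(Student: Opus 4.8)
The plan is to recognize that the statement is nothing but the $r=0$ specialization of Lemma~\ref{lemma.trace}, followed by an explicit evaluation of formula \eqref{htau} in that case. Since $p$ still has $0$ among its roots when $r=0$, Lemma~\ref{lemma.trace} already applies and tells us that $\tau_\zeta$ is a trace on $\Weyl p q 0$; the extra hypothesis $p(\zeta)=0$, $\zeta\neq 0$ is not used in the computation itself (it is retained because this is the case relevant for the Chern--Connes pairing). So the only task is to make $\htau_\zeta$ explicit when $r=0$.

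First I would examine the recursion for the coefficients $t^n_{n-k}$. Every summand of
$$
t^n_{n-k} = \sum_{i=1}^k\binom{n}{i}\frac{r^iq^{n-i}}{1-q^{n-i}}\,t^{n-i}_{n-k}
$$
carries a factor $r^i$ with $i\geq 1$, so setting $r=0$ forces $t^n_{n-k}=0$ for every $k$ with $1\leq k\leq n-1$; only the top coefficient $t^n_n=1$ survives. Hence \eqref{htau} collapses to $\htau_\zeta(z^n)=\zeta^n/(1-q^n)$ for all $n\geq 1$. The hypothesis that $q$ is not a root of unity is exactly what is needed here: it guarantees $1-q^n\neq 0$ (and, along the way, that none of the denominators $1-q^{n-i}$ vanish), so the expression is well defined.

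Finally I would substitute this value into the definition \eqref{tau0}: recalling that the monomials $x^kz^l$, $y^kz^l$ with $k,l\in\NN$ form a basis of $\Weyl p q 0$ (so only non-negative powers of $z$ occur), \eqref{tau0} gives $\tau_\zeta(x^mz^n)=\tau_\zeta(y^mz^n)=\htau_\zeta(z^n)$ when $m=0$, $n\neq 0$, and $0$ otherwise, which is precisely \eqref{tau}. I do not expect a genuine obstacle: the substantive content (the trace property, proved via the identity \eqref{poly} and the factorizations $y^nx^n=s_n(z)$, $x^ny^n=\sigma^n(s_n(z))$) is already in Lemma~\ref{lemma.trace}, and what remains is the routine verification that the $r=0$ recursion annihilates all but the leading coefficient and that the hypothesis on $q$ keeps every denominator nonzero.
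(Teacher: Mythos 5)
Your argument is correct and is essentially identical to the paper's own (very brief) proof: the paper likewise observes that every term of the recursion for $t^n_{n-k}$ carries a factor $r^i$ with $i\geq 1$, so $r=0$ kills all coefficients except $t^n_n=1$, collapsing $\htau_\zeta(z^n)$ to $\zeta^n/(1-q^n)$. Your additional remarks on the role of the root-of-unity hypothesis and on substituting into \eqref{tau0} are accurate elaborations of the same route.
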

\begin{proof}
If $r=0$,  the inductive formula \eqref{htau} implies that for $t^n_{n-k}=0$ if $k\neq 0$, while $t^n_n=1$.
  \end{proof}
  
  \begin{remark}\label{rem.ref}
  The problem of finding all traces on $\Weyl p q r$ may be reduced to the problem of finding all $\KK$-linear maps $\KK[z]\to \KK$ that vanish on the commutator space of $\Weyl p q r$, i.e.\ on all elements
 $$
[x^nz^k,z^ly^n] =  \sigma^n\left(s_n(z)\right) \left(q^nz + \frac{q^n-1}{q-1}r\right)^{k+l} - s_n(z)z^{k+l},
$$
$k,l,n \in \NN$, where the $s_n(z)$ are defined in \eqref{sn}. The corresponding trace can then be defined as in equation \eqref{tau0}. I am grateful to the referee for pointing this out to me.
  \end{remark}
  
We are now in a position to construct quantum circle bundles over  $\Weyl p q 0$ and establish their non-degeneracy as projectively $\ZZ$-graded algebras.  Let us fix a non-zero polynomial  $p(z)$ with a root 0 of multiplicity $k$, and let $\tp(z)$ be obtained from $p(z)$ by factoring out $z^k$, i.e.\
 \begin{equation}\label{tp}
 p(z) = z^k\tp(z).
 \end{equation}
Next, we fix a non-zero $q\in \KK$ and  let $q_
\pm \in \KK$ be such that $q_+q_- = q$. With these data we associate an algebra $\AWeyl p {q_\pm}$ generated by $z_\pm$ and $x_\pm$ subject to the relations
 \begin{equation}\label{a.pm}
 \begin{gathered}
 z_+z_- =  z_-z_+, \qquad  x_+x_- = \tp( z_+z_-), \qquad  x_-x_+ = \tp(q z_+z_-),  \\ x_+z_\pm = q_{\pm}^{-1} z_\pm x_+, \qquad x_-z_\pm = q_{\pm} z_\pm x_-
 \end{gathered}
 \end{equation}
 The algebra $\AWeyl p {q_\pm}$ can be understood as a generalized Weyl algebra over the polynomial ring $\KK[z_+,z_-]$. We view it as a $\ZZ$-graded algebra with degrees of $z_\pm$ being equal to $\pm 1$, and that of $x_\pm$ being equal to $\pm k$. 
 Finally, we view the $k$-th Veronese subalgebra of  $\AWeyl p {q_\pm}$, 
$$
 \AWeyl p {q_\pm}^{(k)} := \bigoplus_{n\in \ZZ} \AWeyl p {q_\pm}_{nk},
$$
 as a $\ZZ$-graded algebra by considering $a\in \AWeyl p {q_\pm}^{(k)}$ to be of degree $n$ if it has a degree $kn$ in $\AWeyl p {q_\pm}$.
 
 \begin{theorem}\label{thm.main}
Let $p$ be a non-zero polynomial with a root 0 of multiplicity $k$.
\begin{zlist}
\item The $k$-th Veronese subalgebra ${\AWeyl p {q_\pm}^{(k)}}$  of  $\AWeyl p {q_\pm}$ is a strongly $\ZZ$-graded algebra with the degree-zero part equal to $\Weyl p q 0$, where $q=q_+q_-$. 
\item If $q$ is not a root of unity, then ${\AWeyl p {q_\pm}^{(k)}}$ is a non-degenerate projectively graded algebra if and only if  $p$ has at least one non-zero root.
\end{zlist}
\end{theorem}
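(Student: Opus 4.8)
The plan is to prove the two parts of Theorem~\ref{thm.main} separately, building the required structures explicitly on the given generators.

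\smallskip

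\textbf{Part (1): strong $\ZZ$-grading and identification of the degree-zero part.} First I would verify that the relations \eqref{a.pm} are consistent with the declared grading (degrees $\pm 1$ for $z_\pm$, degrees $\pm k$ for $x_\pm$), so that $\AWeyl p {q_\pm}$ is indeed $\ZZ$-graded; this amounts to checking that each relation is homogeneous. Next I would exhibit a PBW-type basis for $\AWeyl p {q_\pm}$ — monomials of the form $x_+^a z_+^b z_-^c$ and $x_-^a z_+^b z_-^c$ with $a,b,c\in\NN$ (one should be able to deduce this from Bavula's results on generalized Weyl algebras over $\KK[z_+,z_-]$, as indicated in the text) — and read off the homogeneous components. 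The degree-$m$ component $\AWeyl p {q_\pm}_m$ is then spanned by those basis monomials whose total degree equals $m$. To identify the degree-zero part of the $k$-th Veronese with $\Weyl p q 0$, I would set $x := x_+$, $y := x_-$, $z := z_+z_-$ and check, using \eqref{a.pm}, that these satisfy precisely the relations \eqref{algebra} with $r=0$: indeed $xy = x_+x_- = \tp(z_+z_-) = \tp(z)$ and $yx = x_-x_+ = \tp(qz)$, while $z = z_+z_-$ and $p(z)=z^k\tp(z)$... one must be careful here, since \eqref{algebra} has $xy=p(qz)$, $yx=p(z)$ — so the correct generators of the degree-zero subalgebra are $x := x_-x_+^{(k-1)}$-type combinations, or rather one uses $x := z_-^k x_+$ (degree $0$ in $\AWeyl p {q_\pm}$ after Veronese renormalisation), $y := z_+^k x_-$, $z := z_+z_-$; then $xy = z_-^k x_+ z_+^k x_- = q^{?} z_+^k z_-^k x_+x_-$ and the $q$-powers from the commutation relations $x_+z_\pm = q_\pm^{-1}z_\pm x_+$ combine to give exactly $p(qz)$, and similarly $yx = p(z)$, $xz = qzx$, $yz = q^{-1}zy$. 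I would carry out this bookkeeping carefully to pin down the isomorphism. Finally, for strong grading it suffices by \eqref{conn.0} to produce $\omega\in\AWeyl p {q_\pm}^{(k)}_{-1}\ot \AWeyl p {q_\pm}^{(k)}_{1}$ and $\bomega$ in the opposite component with multiplications equal to $1$; a natural candidate uses the fact that $z_\pm$ are "almost invertible" in the sense that $z_+^k z_-^k$ and $z_+^{k}x_-$, $z_-^k x_+$ etc.\ can be combined — more precisely one looks for elements of degree $k$ in $\AWeyl p {q_\pm}$ and degree $-k$ whose products give $1$, exploiting that $x_+x_-$ and $x_-x_+$ are polynomials in $z_+z_-$ with $\tp(0)\ne 0$ (since $0$ has multiplicity exactly $k$ in $p$), so $\tp(z_+z_-)$ is invertible in a suitable localisation — but since we need genuine elements of the algebra, I expect the honest strong-grading elements to be built from $z_+^k, z_-^k$ together with correction terms, and verifying \eqref{conn.0} is the crux of Part~(1).

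\smallskip

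\textbf{Part (2): non-degeneracy via the Chern--Connes pairing.} By the discussion preceding the theorem, $\AWeyl p {q_\pm}^{(k)}$ is automatically projectively graded (Part~(1) plus the generalities of Section~\ref{sec.pre.line}), and each $\AWeyl p {q_\pm}^{(k)}_n$ is finitely generated. So condition (2)(a) of Definition~\ref{def.proj.gra} always holds, and the content is condition (2)(b), together with the "only if": if $q$ \emph{is} a root of unity, or if $p$ has $0$ as its only root, non-degeneracy fails. For the "if" direction I would: (i) compute the strong connection elements $\omega(n)$ via \eqref{conn.n}–\eqref{conn.n1} starting from the $\omega,\bomega$ of Part~(1), hence the idempotents $E(n)\in M_N(\Weyl p q 0)$ via \eqref{idem}; (ii) assuming $p$ has a non-zero root $\zeta$ and $q$ not a root of unity, invoke Corollary~\ref{cor.trace} to get the trace $\tau_\zeta$ on $\Weyl p q 0$; (iii) evaluate the Chern--Connes number $\langle \tau_\zeta, [E(n)]\rangle = (\Trace\ot\tau_\zeta)(E(n)) = \sum_i \tau_\zeta\big(\omega''(n)_i\,\omega'(n)_i\big)$ and show it equals $-n$. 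Since $\tau_\zeta$ kills all $x^m z^l$, $y^m z^l$ with $m\ne 0$ and is $\zeta^l/(1-q^l)$ on $z^l$ ($l\ne 0$) and $0$ on $1$, the computation reduces to extracting, from $\sum_i \omega''(n)_i\omega'(n)_i$ (which multiplies to $1$ but whose individual terms spread over several homogeneous-in-the-PBW-sense pieces), the coefficient of each $z^l$. I expect a clean telescoping/geometric-series identity to produce $-n$; here the hypothesis that $q$ is not a root of unity is what makes $1-q^l\ne 0$ so that $\tau_\zeta$ is well defined, and the existence of a non-zero root $\zeta$ is what makes $\tau_\zeta$ nonzero. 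Once $\langle\tau_\zeta,[E(n)]\rangle = -n$ is established, two modules with different Chern numbers cannot be isomorphic (the pairing descends to $K_0$ and hence to isomorphism classes of f.g.\ projectives), giving (2)(b).

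\smallskip

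\textbf{The converse and the main obstacle.} For the "only if": if $p(z)=z^k$ (no non-zero root) then $\tp$ is a nonzero constant, $\AWeyl p {q_\pm}$ degenerates to a skew Laurent-type algebra in which $x_\pm$ become invertible, so each $\AWeyl p {q_\pm}^{(k)}_n$ contains a unit and is free over $\Weyl p q 0$ by Lemma~\ref{lem.trivial}, contradicting (2)(b) — I would spell this out. If $q$ is a root of unity, say $q^d=1$, one shows $\AWeyl p {q_\pm}^{(k)}_n \cong \AWeyl p {q_\pm}^{(k)}_{n+d}$ as left modules (a unit of degree $d$ appears — e.g.\ a suitable monomial in the $z_\pm$ becomes central/invertible), again violating (2)(b); this too I would verify via Lemma~\ref{lem.trivial}. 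The main obstacle I anticipate is step (iii) of the "if" direction: organising the connection elements $\omega(n)$ concretely enough that the sum $\sum_i \tau_\zeta(\omega''(n)_i\omega'(n)_i)$ can be evaluated in closed form and shown to equal $-n$ — this is where the interplay between the $q_\pm$-commutation relations, the multiplicity-$k$ factorisation $p=z^k\tp$, and the Veronese reindexing all have to be controlled simultaneously. A secondary subtlety is making the PBW basis and the identification of $\Weyl p q 0$ inside the Veronese fully rigorous, since all the trace computations are performed in that basis.
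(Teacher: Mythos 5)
Your outline follows the same route as the paper (explicit strong-connection elements for part (1); evaluation of the cyclic trace $\tau_\zeta$ on $\tr E(n)$ for part (2); units plus Lemma~\ref{lem.trivial} for the degenerate case $p=z^k$), but the two steps you yourself flag as ``the crux'' and ``the main obstacle'' are exactly the content of the theorem, and they are not carried out, so the proposal has genuine gaps. For part (1) you never produce $\omega$ and $\bomega$. The paper's construction is not a localisation trick but an exact algebraic identity: writing $\tp(z)=\tp(0)-z\hp(z)$, the element $\tp(0)^k-z^k\hp(z)^k$ is divisible by $\tp(z)$ (difference of $k$-th powers), and
$$
\omega=\frac{1}{\tp(0)^k}\Bigl(q^k\hp(qz)^kz_-^k\ot z_+^k+x_-\ot\frac{\tp(0)^k-z^k\hp(z)^k}{\tp(z)}\,x_+\Bigr)
$$
then multiplies to $1$ because $x_-\,f(z)\,x_+=f(qz)\tp(qz)$. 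Without identifying this ``correction term'' the strong grading is unproved. Similarly, for part (2) you assert that a ``telescoping identity'' should give $\tau_\zeta(\tr E(n))=-n$; the paper actually derives a recursion $e_{n+1}(z)=\tp(0)^{-k}\bigl((\tp(0)^k-\hp(z)^kz^k)e_n(q^{-1}z)-(\tp(0)^k-q^k\hp(qz)^kz^k)e_n(z)\bigr)+e_n(z)$ and inducts using the key identity $\htau_\zeta(f(z))-\htau_\zeta(f(qz))=f(\zeta)-f(0)$ together with $\zeta\hp(\zeta)=\tp(0)$. None of this is in your write-up, and it is not routine.

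Two further points. First, your identification of the degree-zero part is wrong as written: with $x:=z_-^kx_+$, $y:=z_+^kx_-$ one gets $xy=q_+^{-k}z^k\tp(z)=q_+^{-k}p(z)$ and $yx=q_+^{-k}p(qz)$, which is not \eqref{algebra}; the correct generators are $x:=x_-z_+^k$ and $y:=z_-^kx_+$ (then $xy=q^kz^k\tp(qz)=p(qz)$ and $yx=z^k\tp(z)=p(z)$ on the nose, with no stray scalars). Second, you have misread the logical structure of part (2): ``$q$ is not a root of unity'' is a standing hypothesis, not one side of the equivalence, so your proposed extra argument for the root-of-unity case (which in any event rests on the dubious claim that a monomial in the non-invertible elements $z_\pm$ becomes a unit) is not required and should be dropped. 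The ``only if'' direction you do give (when $p=z^k$ the elements $x_\pm$ are units, so every graded component is free by Lemma~\ref{lem.trivial}) is correct and matches the paper.
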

\begin{proof}
(1) One easily verifies that the degree-zero part of $\AWeyl p {q_\pm}^{(k)}$ is generated by $x:= x_-z_+^k$, $y:=z_-^kx_+$ and $z:= z_+z_-$, and that these satisfy the relations \eqref{algebra}. We will continue to write $z$ for  $z_+z_-$.

Define the polynomial $\hp(z)$ by
\begin{equation}\label{hp}
\tp(z) = \tp(0) -z\hp(z).
\end{equation}
Note that $\tp(0) \neq 0$, so we can consider
\begin{equation}\label{omega.weyl}
\begin{aligned}
\omega &=  \frac{1}{\tp(0)^k} \left(q^k\hp(qz)^kz_-^k\ot z_+^k + x_- \ot \left(\sum_{i=0}^{k-1}z^i\hp(z)^i \tp(0)^{k-i-1}\right)x_+ \right)\\
&=\frac{1}{\tp(0)^k} \!\left(q^k\hp(qz)^kz_-^k\ot z_+^k + x_- \ot \frac{\tp(0)^k -z^k\hp(z)^k}{\tp(z)}x_+ \right) \in {\AWeyl p {q_\pm}^{(k)}}_{\!\!\!-1} \ot {\AWeyl p {q_\pm}^{(k)}}_{1}.
\end{aligned}
\end{equation}
A simple computation which uses the relations \eqref{a.pm},
\begin{eqnarray*}
\frac{1}{\tp(0)^k} \!\!\!\!\!&&\!\!\!\!\!\left(x_- \frac{\tp(0)^k -z^k\hp(z)^k}{\tp(z)}x_+ +q^k\hp(qz)^kz_-^kz_+^k\right)\\
&& = \frac{1}{\tp(0)^k} \left(\tp(qz)\frac{\tp(0)^k -q^kz^k\hp(qz)^k}{\tp(qz)} +q^k\hp(qz)^kz^k \right) =1,
\end{eqnarray*}
reveals that $\omega$ satisfies condition \eqref{conn.0}. In a similar way one checks that
$$
\bomega = \frac{1}{\tp(0)^k} \left(\hp(z)^kz_+^k\ot z_-^k + x_+ \ot \frac{\tp(0)^k -q^kz^k\hp(qz)^k}{\tp(qz)}x_- \right)\in {\AWeyl p {q_\pm}^{(k)}}_{1} \ot {\AWeyl p {q_\pm}^{(k)}}_{\!\!\!-1}
$$
satisfies the other condition in \eqref{conn.0}. Therefore, ${\AWeyl p {q_\pm}^{(k)}}$ is a strongly $\ZZ$-graded algebra as claimed.

(2)  If $p$ has no roots other than 0, then the second and third of relations \eqref{a.pm} imply that $x_\pm$ are units, hence, for all positive $n$,  $x_\pm^{n}$ are units in ${\AWeyl p {q_\pm}^{(k)}}_{\pm n}$, so ${\AWeyl p {q_\pm}^{(k)}}_{\pm n}\cong \Weyl p q 0$ by Lemma~\ref{lem.trivial}. Consequently ${\AWeyl p {q_\pm}^{(k)}}$ is a degenerate projectively graded algebra.

Assume that $p$ has non-zero roots. To prove that all the ${\AWeyl p {q_\pm}^{(k)}}_n$ are non-free (except for $n=0$) and mutually non-isomorphic we will pick a non-zero root $\zeta$ of $p$ and compute the value of the cyclic cocycle $\tau_\zeta$ \eqref{tau} on the trace of the idempotent $E(n)$ given by \eqref{idem}. We deal only with the positive $n$ case, the other case is similar. 

Let
$$
e_n  := \tr E(n) = \sum_i\omega(n)''_i\omega(n)'_i.
$$
Exploring \eqref{omega.weyl} and \eqref{conn.n1} as well as the defining relations \eqref{a.pm} of ${\AWeyl p {q_\pm}}_n$, we observe that $e_n$ is a polynomial in $z$ (independent of $x_\pm$), which, for non-negative $n$ is given by the following recursive formula:
\begin{equation}\label{e.1}
e_{n+1}(z)  = \frac{1}{\tp(0)^k} \left(\frac{\tp(0)^k -z^k\hp(z)^k}{\tp(z)}x_+e_n(z) x_- +q^kz_+^ke_n(z)\hp(qz)^kz_-^k\right).
\end{equation}
With the help of \eqref{a.pm} and \eqref{hp} this can be evaluated further to give
\begin{equation}\label{e.2}
e_{n+1}(z)  = \frac{1}{\tp(0)^k} \left(\left(\tp(0)^k - \hp(z)^kz^k\right)e_n(q^{-1}z) - \left(\tp(0)^k - q^k\hp(qz)^kz^k\right)e_n(z)\right)+e_n(z).
\end{equation}
In particular, 
\begin{equation}\label{e.3}
e_1(z) = \frac{1}{\tp(0)^k} \left(q^k\hp(qz)^k - \hp(z)^k\right)z^k+1,
\end{equation}
 hence $e_1(0)=1$. A simple inductive argument which uses \eqref{e.2} proves that in fact $e_n(0)=1$, for all positive $n$. We will show that $\tau_\zeta(e_n(z)) = -n$.
 
 Remembering the definitions of $\tau_\zeta$ and $\htau_\zeta$, specifically that $\htau_\zeta(1)=0$, we can use \eqref{e.3} to compute
 $$
 \tau_\zeta(e_1(z)) = \frac{1}{\tp(0)^k} \left(\htau_\zeta \left((qz)^k\hp(qz)^k - z^k\hp(z)^k\right)\right).
 $$
 Note that since $\zeta$ is a non-zero root of $p(z)$ it is also a root of $\tp(z)$, hence
 \begin{equation}\label{hpzeta}
 \zeta\hp(\zeta) = \tp(0).
 \end{equation}
Furthermore, $z\hp(z)$  is  a polynomial of degree at least one, so the formula \eqref{poly} can be applied thus yielding
 $$
  \tau_\zeta(e_1(z)) = -\frac{1}{\tp(0)^k} \left(\zeta\hp(\zeta)\right)^k = -1,.
  $$
 by \eqref{hpzeta}. Assume inductively that $\tau_\zeta(e_m(z)) = -m$. Then, using \eqref{e.2} and \eqref{poly} one finds
 \begin{eqnarray*}
  \tau_\zeta(e_{m+1}(z)) &=& \frac{1}{\tp(0)^k} \htau_\zeta\left(\left(\tp(0)^k - \hp(z)^kz^k\right)e_m(q^{-1}z) - \tp(0)^k\right)\\
  &&-  \frac{1}{\tp(0)^k} \htau_\zeta\left(\left(\tp(0)^k - q^k\hp(qz)^kz^k\right)e_m(z) - \tp(0)^k\right)+\htau_\zeta(e_m(z))\\
  &=& \frac{1}{\tp(0)^k}\left(\left(\tp(0)^k - \hp(\zeta)^k\zeta^k\right)e_m(q^{-1}\zeta) - \tp(0)^k\right) -m = -m-1.
  \end{eqnarray*}
Therefore, 
\begin{equation}\label{index}
\tau_\zeta(e_n(z)) = -n.
\end{equation}
 As explained in  \cite[Section~III.3]{Con:non} the formula \eqref{index} does not depend on the choice of the idempotent representing the isomorphism class of ${\AWeyl p {q_\pm}^{(k)}}_n$ in  the algebraic $K$-group $K_0({\AWeyl p {q_\pm}^{(k)}})$. The index formula \eqref{index}  determines the {\em Chern-Connes pairing} between even cyclic cohomology and the $K_0$-group. Since the numbers $\tau_\zeta(e_n(z))$ are not zero and distinguish between different $n$,  all the modules ${\AWeyl p {q_\pm}^{(k)}}_n$ are not free for all positive $n$ and they are mutually non-isomorphic.  Similar arguments confirm that the index formula \eqref{index} remains true also for negative values of $n$. Therefore, ${\AWeyl p {q_\pm}^{(k)}}$ is a non-degenerate projectively graded algebra.
\end{proof}

\begin{example}\label{ex.spindle}
The quantum weighted projective space or the quantum spindle algebra $\cO(\WW\PP_q(k,l))$ \cite{BrzFai:tea} is a generalized Weyl algebra $\Weyl p {q^{2l}} 0$, where
$$
p(z) = z^k \prod_{i=0}^{l-1}(1- q^{-2i}z).
$$
The  associated algebra $\AWeyl p {q_\pm}^{(k)}$, with $q_\pm =q^l$ coincides with the coordinate algebra of the quantum lens space $\cO(L_q(kl; k,l))$ \cite{HonSzy:len}. Hence the first part of Theorem~\ref{thm.main} recovers \cite[Theorem~3.3]{BrzFai:tea} in the case $k=1$ and \cite[Proposition~6.5]{AriKaa:Pim} for all other values of $k$. The index pairing calculations presented in the proof of the second part of Theorem~\ref{thm.main} confirm and extend those in \cite[Section~7.2]{AriKaa:Pim}.
\end{example}

\begin{remark}\label{rem.nonstrong}
Note that the algebra $\AWeyl p {q_\pm}$ is not strongly graded, unless, of course $k=1$. The degree-one submodule of $\AWeyl p {q_\pm}$ is generated by $x_\pm z_\mp^{k\mp 1}$ and $z_+$, while the degree minus one submodule is generated by $x_\pm z_\mp^{k\pm 1}$ and $z_-$. Every combination of the elements from the first group multiplied by the elements from the second will produce a term with at least one $z_\pm$. Such terms cannot combine to produce the identity element of $\AWeyl p {q_\pm}$.
\end{remark}

Let us recall that the Hochschild cohomology $H^l(\cA, M)$  of an algebra $\cA$ with values in an $\cA$-bimodule $M$ understood as a left $\cA^e$-module is defined as the right derived functor $\ext l {\cA^e} \cA M$ of the Hom-functor. Since $\cA^e$ is an $\cA^e$-bimodule, the Hochschild cohomology groups $\ext l {\cA^e} \cA {\cA^e}$ inherit an $\cA$-bimodule structure from the right $\cA^e$-module structure of $\cA^e$ (or the inner bimodule structure of $\cA^e$). 

\begin{proposition}\label{prop.sus.a}
Let $p(z)$ be a non-zero polynomial with a simple root 0, let $\tp(z)$ be given by \eqref{tp} with $k=1$, and let $\AWeyl p {q_\pm}$ be the algebra given by  generators and relations \eqref{a.pm}. If the polynomial $p$ has no repeated roots, then there exists  an algebra endomorphism $\kappa: \AWeyl p {q_\pm}\to \AWeyl p {q_\pm}$ such that
$$
H^l (\AWeyl p {q_\pm}, \AWeyl p {q_\pm}^e) \cong \begin{cases}
0, & \mbox{if $l\neq 3$}, \\
\AWeyl p {q_\pm}^{\kappa}, & \mbox{if $l=3$},
\end{cases}
$$ 
as $\AWeyl p {q_\pm}$-bimodules. The superscript $\kappa$ indicates twisting of the regular right $\AWeyl p {q_\pm}$-module structure as in \eqref{twist}.
\end{proposition}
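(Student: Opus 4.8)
The plan is to identify $\AWeyl p {q_\pm}$ (in the case $k=1$) with a degree-one generalized Weyl algebra over the smooth commutative ring $R=\KK[z_+,z_-]$, and then to run the Hochschild-cohomology computation of Liu \cite{Liu:hom} --- whose argument goes through, with $\KK[z]$ replaced by $\KK[z_+,z_-]$, for any generalized Weyl algebra over a smooth affine base whose defining element is a squarefree nonzerodivisor --- while tracking the resulting twist in the manner of \cite{LiuWan:twi}. Comparing the relations \eqref{a.pm} with \eqref{gW} one reads off $\AWeyl p {q_\pm}=R(P,\sigma)$, where $P:=\tp(z_+z_-)\in R$ and $\sigma$ is the automorphism of $R$ with $\sigma(z_\pm)=q_\pm z_\pm$ (so $\sigma(P)=\tp(qz_+z_-)$, matching $x_-x_+=\tp(qz_+z_-)$, with $x_-,x_+$ in the roles of the generators $x,y$ of \eqref{gW}). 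The first task is to translate the hypotheses on $p$ into the input the machinery needs. Since $0$ is a \emph{simple} root of $p$ we have $\tp(0)\ne 0$, and since $p$ has no repeated roots $\tp(z)=\prod_i(z-\zeta_i)$ has pairwise distinct nonzero roots; hence $P=\prod_i(z_+z_- - \zeta_i)$ is a product of pairwise non-associate irreducibles of $R$, i.e.\ a squarefree nonzerodivisor, and $R/(P)$ is a smooth one-dimensional algebra (a finite disjoint union of one-dimensional tori). (If $0$ is the only root of $p$, then $P$ is a unit and $\AWeyl p {q_\pm}\cong R\rtimes_\sigma\ZZ$; this case is covered by the same analysis.)

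The second task is the homological computation. Because $R$ is Calabi--Yau of dimension $2$ and $P$ is a squarefree nonzerodivisor, $\AWeyl p {q_\pm}=R(P,\sigma)$ is homologically smooth of global dimension $\leq 3$: one builds a finite projective resolution of $\AWeyl p {q_\pm}$ as an $\AWeyl p {q_\pm}^e$-module by splicing the length-$2$ Koszul resolution of $R$ over $R^e$ with the two-term complex encoding the relations $x_+x_-=P$ and $x_-x_+=\sigma(P)$, and exactness of this splice uses precisely that $P$ is squarefree --- this is where the hypotheses ``no repeated roots'' and ``simple root at $0$'' are consumed (a multiple root would break exactness). Exactly as for $\Weyl p q r$ in \cite{Liu:hom}, this resolution moreover shows that $\AWeyl p {q_\pm}$ is twisted Calabi--Yau of dimension $3$; hence, by Van den Bergh duality, applying $\mathrm{Hom}_{\AWeyl p {q_\pm}^e}(-,\AWeyl p {q_\pm}^e)$ to the resolution produces a complex whose cohomology vanishes except in degree $3$, where it is $\AWeyl p {q_\pm}$ with its right module structure twisted by an algebra endomorphism $\kappa$ (in fact an automorphism). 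This is exactly the claimed statement: $H^l(\AWeyl p {q_\pm},\AWeyl p {q_\pm}^e)=0$ for $l\ne 3$, and $H^3(\AWeyl p {q_\pm},\AWeyl p {q_\pm}^e)\cong\AWeyl p {q_\pm}^\kappa$.

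The third task is to exhibit $\kappa$ explicitly. It is assembled from three contributions read off the dualized resolution: the Nakayama automorphism of $R=\KK[z_+,z_-]$, which is the identity; the Jacobian of $\sigma$, i.e.\ the scalar by which $\sigma$ acts on $\Omega^2_R=R\,dz_+\wedge dz_-$, namely $\sigma^*(dz_+\wedge dz_-)=q_+q_-\,dz_+\wedge dz_-=q\,dz_+\wedge dz_-$; and the contribution of the $x_\pm$-direction, which twists by $\sigma^{\pm1}$ together with the (unit-free) factor $\sigma(P)/P$ interpreted inside $\AWeyl p {q_\pm}$. Combining these gives a formula for $\kappa$ on the generators $z_\pm,x_\pm$ --- scaling by explicit monomials in $q_+,q_-$, together with multiplication of $x_\pm$ by the indicated polynomial factor --- which one records. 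I expect the two genuine obstacles to be: (i) verifying that the spliced bimodule complex is exact, equivalently that $\AWeyl p {q_\pm}$ is homologically smooth and satisfies the rigid-dualizing-complex identity, which is precisely where the hypotheses on $p$ do their work and where a multiple root would obstruct the argument; and (ii) carrying the twist through the splicing carefully enough to pin down $\kappa$, rather than merely establishing that some twisting endomorphism occurs.
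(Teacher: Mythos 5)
Your identification of $\AWeyl p{q_\pm}$ (for $k=1$) with a degree-one generalized Weyl algebra $R(P,\sigma)$ over $R=\KK[z_+,z_-]$, $P=\tp(z_+z_-)$, $\sigma(z_\pm)=q_\pm z_\pm$, is correct (the paper itself records this observation after \eqref{a.pm}), and your translation of the hypotheses into ``$P$ is a squarefree nonzerodivisor with smooth zero locus'' is also right. The problem is that the core of your argument --- that Liu's bimodule resolution and twisted Calabi--Yau computation for generalized Weyl algebras ``goes through with $\KK[z]$ replaced by $\KK[z_+,z_-]$'' --- is precisely the technical content of the proposition, and you assert it rather than prove it. The result you lean on, \cite[Theorem~4.5]{Liu:hom}, is stated for rank-one generalized Weyl algebras over the one-variable polynomial ring; constructing a length-3 projective bimodule resolution for a generalized Weyl algebra over a two-dimensional base and verifying exactness of your proposed splice of the Koszul resolution of $R$ with the two-term complex of relations is genuine work, which you yourself flag as an ``obstacle'' and leave undone. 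As written this is a plausible programme, not a proof.

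The paper sidesteps exactly this difficulty by a different reduction that stays entirely within proven results. It realizes $\AWeyl p{q_\pm}\cong \cA/\cA\omega$, where $\cA=\Weyl\tp q0[z_-;\sigma_-][z_+;\sigma_+]$ is an iterated Ore extension of the rank-one generalized Weyl algebra $\Weyl\tp q0$ over $\KK[z]$, and $\omega=z_-z_+-z$ is a normal regular element. Then: $\Weyl\tp q0$ is twisted Calabi--Yau of dimension $2$ by \cite[Theorem~4.5]{Liu:hom} (this is where ``no repeated roots'' is consumed), $\cA$ is twisted Calabi--Yau of dimension $4$ by the Ore-extension theorem \cite[Theorem~0.2]{LiuWan:twi}, and Lemma~\ref{lemma.from} --- a twisted Rees-lemma argument applied to the sequence $0\to\cA^{\mu^{-1}}\to\cA\to\cB_\pi\to 0$ --- drops the cohomological degree from $4$ to $3$ and produces the twisting endomorphism $\kappa$. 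To rescue your approach you would need to carry out the two-variable resolution in full; alternatively, adopt such a reduction to the one-variable case. Two smaller points: the statement only asserts that $\kappa$ is an endomorphism, so your parenthetical claim that it is an automorphism is an additional assertion requiring justification; and the explicit formula for $\kappa$ you propose to assemble is not needed for the statement, which is purely existential.
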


\begin{lemma}\label{lemma.from}
Let $\cA$ be an algebra,
let $\omega$ be a normal regular element of $\cA$ (i.e.\ $\omega$ is not a zero-divisor and is such that $\cA \omega = \omega \cA$), and 
let $\cB:= \cA/\omega\cA$.
If there exists an algebra endomorphism $\nu: \cA\to \cA$ such that
\begin{equation}\label{hom.as}
\ext l  {\cA^e} {\cA} {\cA^e} = \begin{cases}
0, & \mbox{if $l\neq d+1$}, \\
\cA^{\nu}, & \mbox{if $l=d+1$},
\end{cases}
\end{equation}
as $\cA$-bimodules, then there exists an algebra endomorphism $\kappa :\cB \to \cB$, such that 
$$
\ext l {\cB^e} {\cB} {\cB^e} \cong \begin{cases}
0, & \mbox{if $l\neq d$}, \\
\cB^{\kappa}, & \mbox{if $l=d$},
\end{cases}
$$ 
as $\cB$-bimodules.
\end{lemma}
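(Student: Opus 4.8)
The plan is to express $\cB$ homologically in terms of $\cA$ through the normal element $\omega$, to carry the rigidity hypothesis \eqref{hom.as} across the surjection $\cA^{e}\twoheadrightarrow\cB^{e}$ by a base-change argument, and to read off from the resulting computation both the shift $d+1\rightsquigarrow d$ and the twisting endomorphism $\kappa$. First, since $\cA\omega=\omega\cA$ and $\omega$ is not a zero-divisor there is a unique algebra automorphism $\phi$ of $\cA$ with $a\omega=\omega\phi(a)$ for all $a\in\cA$, and right multiplication by $\omega$ is an injective homomorphism of $\cA$-bimodules sitting in a short exact sequence
\[
0\longrightarrow\cA^{\phi^{-1}}\xrightarrow{\ \cdot\,\omega\ }\cA\longrightarrow\cB\longrightarrow 0
\]
(notation \eqref{twist}). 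In the derived category of $\cA^{e}$-modules this exhibits $\cB$ as the cone of $\cdot\,\omega$, and since $\cA$ is homologically smooth (as it is in the applications), lifting $\cdot\,\omega$ to a chain map between finite resolutions by finitely generated projective $\cA^{e}$-modules makes $\cB$ a perfect $\cA^{e}$-module of projective dimension at most $d+2$.

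Next I would compute $\mathrm{RHom}_{\cA^{e}}(\cB,\cA^{e})$. Applying $\mathrm{RHom}_{\cA^{e}}(-,\cA^{e})$ to the displayed sequence, using \eqref{hom.as} together with ${}^{\mu}\!\cA^{\nu}\cong\cA^{\nu\mu^{-1}}$ (for $\mu$ an automorphism) to write $\mathrm{RHom}_{\cA^{e}}(\cA^{\phi^{-1}},\cA^{e})\simeq\cA^{\nu_{1}}[-(d+1)]$ for a suitable automorphism $\nu_{1}$ built from $\nu$ and $\phi$, one obtains a long exact sequence in which the only possibly non-zero connecting map is $\ext{d+1}{\cA^{e}}{\cA}{\cA^{e}}\to\ext{d+1}{\cA^{e}}{\cA^{\phi^{-1}}}{\cA^{e}}$. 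Under these identifications this map is multiplication by an automorphic image $\omega'$ of $\omega$; it is injective because $\omega'$ is again a non-zero-divisor, and by the homological identity for the Nakayama automorphism $\omega'$ generates the same two-sided ideal as $\omega$. Hence $\ext{l}{\cA^{e}}{\cB}{\cA^{e}}=0$ for $l\neq d+2$, while $\ext{d+2}{\cA^{e}}{\cB}{\cA^{e}}$ is the cokernel of that injection, an $\cA$-bimodule $M_{0}$ that is $\cong\cB=\cA/\omega\cA$ as a one-sided module and on which $\omega$ acts as $0$ from either side; so $\mathrm{RHom}_{\cA^{e}}(\cB,\cA^{e})\simeq M_{0}[-(d+2)]$.

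Finally I would pass from $\cA^{e}$-duality to $\cB^{e}$-duality. Write $\cB^{e}=\cA^{e}/(\omega\ot1,\,1\ot\omega)$; the two generators commute and form a normalizing regular sequence, so $\cB^{e}$ has a two-term Koszul-type resolution over $\cA^{e}$. Since $\cB$ is perfect over $\cA^{e}$, base change for $\mathrm{RHom}$ gives
\[
\mathrm{RHom}_{\cA^{e}}(\cB,\cA^{e})\ot^{\mathbf L}_{\cA^{e}}\cB^{e}\ \simeq\ \mathrm{RHom}_{\cB^{e}}\bigl(\cB^{e}\ot^{\mathbf L}_{\cA^{e}}\cB,\ \cB^{e}\bigr);
\]
and because $\omega$ acts as $0$ on every $\cB$-bimodule, tensoring the Koszul resolution against $\cB$ (respectively against $M_{0}$) has vanishing differential, giving $\cB^{e}\ot^{\mathbf L}_{\cA^{e}}\cB\simeq\cB\ot_{\KK}\Lambda^{\bullet}\KK^{2}$ and $M_{0}\ot^{\mathbf L}_{\cA^{e}}\cB^{e}\simeq M_{0}\ot_{\KK}\Lambda^{\bullet}\KK^{2}$, with the exterior powers in homological degrees $0,1,2$. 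Substituting $\mathrm{RHom}_{\cA^{e}}(\cB,\cA^{e})\simeq M_{0}[-(d+2)]$ and comparing the two sides cohomological degree by degree — the Koszul factor merely spreads one degree over three consecutive ones with multiplicities $1,2,1$ — forces $\mathrm{RHom}_{\cB^{e}}(\cB,\cB^{e})$ to be concentrated in cohomological degree $d$, with $\ext{d}{\cB^{e}}{\cB}{\cB^{e}}\cong M_{0}$. Being so identified, this $\mathrm{Ext}$-group is a $\cB$-bimodule, and it is free of rank one as a left $\cB$-module by the previous paragraph, hence isomorphic to $\cB^{\kappa}$ for a unique algebra endomorphism $\kappa$ of $\cB$ — which is the assertion; in passing one also sees that $\cB$ is homologically smooth.

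The step I expect to be the real work is the twist-bookkeeping just sketched: pinning down precisely the element $\omega'$ by which the connecting map multiplies and verifying, through the homological identity for the Nakayama automorphism, that $(\omega')=(\omega)$ — so that $M_{0}$ really is $\cB$ as a one-sided module and is killed by $\omega$ on both sides; keeping track of which of the several $\cA^{e}$-module structures on $\cB^{e}$ enters the base-change isomorphism; and checking that the Koszul complex of the normalizing — rather than central — sequence $(\omega\ot1,\,1\ot\omega)$ behaves exactly as in the commutative case. Once $\ext{d+2}{\cA^{e}}{\cB}{\cA^{e}}$ has been pinned down the endomorphism $\kappa$ comes for free; exhibiting it explicitly, as a composite of $\nu$ with the normality automorphism $\phi$ entering from the two sides, is then only a matter of following the twists through the computation.
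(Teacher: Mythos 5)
Your first half coincides with the paper's: both arguments start from the short exact sequence $0\to\cA^{\mu^{-1}}\xrightarrow{\,\cdot\omega\,}\cA\to\cB\to 0$ determined by the normality automorphism, apply $\mathrm{RHom}_{\cA^e}(-,\cA^e)$, and use injectivity of multiplication by $\omega$ on $\cA^{\nu}$ to conclude that $\ext l {\cA^e}{\cB}{\cA^e}$ vanishes except in degree $d+2$, where it is a twist of $\cB$. (The paper identifies the connecting map simply as left multiplication by $\omega$, with cokernel $\cB^{\nu\circ\mu}$; your detour through an ``automorphic image $\omega'$'' and the Nakayama identity is unnecessary here, and note that since $\omega\mu(\omega)=\omega\omega$ one has $\mu(\omega)=\omega$ anyway.) Where you genuinely diverge is the descent from $\cA^e$ to $\cB^e$: the paper applies the twisted Rees lemma twice, once for the normal regular element $\omega\ot 1$ of $\cA^e$ and once for $1\ot\omega$ of $\cB\ot\cA^{op}$, each application trading one $\mathrm{Ext}$ degree for one quotient; you instead resolve $\cB^e$ over $\cA^e$ by the length-two Koszul complex of $(\omega\ot 1,1\ot\omega)$ and invoke derived base change along $\cA^e\to\cB^e$. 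Your degree-by-degree cancellation does then close: the degrees $n\notin\{d,d+1,d+2\}$ of the comparison kill every $\ext l {\cB^e}{\cB}{\cB^e}$ with $l\neq d$, and degree $d+2$ identifies the survivor with $M_0$; the final passage from a bimodule that is left-free of rank one to $\cB^{\kappa}$ is exactly the paper's closing step.

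The one real gap is the hypothesis you slip in parenthetically. Derived base change in the form $\mathrm{RHom}_{\cA^e}(\cB,\cA^e)\ot^{\mathbf L}_{\cA^e}\cB^e\simeq\mathrm{RHom}_{\cB^e}(\cB^e\ot^{\mathbf L}_{\cA^e}\cB,\cB^e)$ requires $\cB$ to be perfect over $\cA^e$, which you obtain from homological smoothness of $\cA$ --- but the lemma assumes only the rigid Gorenstein condition \eqref{hom.as}, not smoothness. The Rees lemma (equivalently, the change-of-rings spectral sequence for the quotient by a normal regular element, in which $\ext q {\cA^e}{\cB\ot\cA^{op}}{\cA^e}$ is concentrated in $q=1$) needs no such finiteness, so the paper's proof covers the stated generality while yours covers only the smooth case. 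That suffices for the application in Proposition~\ref{prop.sus.a}, where $\cA$ is an iterated Ore extension of a twisted Calabi--Yau algebra, but to prove the lemma as stated you should either add perfectness as a hypothesis or replace the base-change step by the Rees-lemma argument.
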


\begin{proof}
We will adapt the proof of \cite[Proposition~4.4]{Liu:hom}  (see also the proof of \cite[4.4.~Corollary]{Lev:som}) by incorporating a twist into it and extending it to this more general situation and calculate $\ext i {\cB^e} {\cB} {\cB^e}$. 

Let $\pi: \cA\to \cB$ be the canonical epimorphism.  Whenever we view a $\cB$-bimodule $M$ as an $\cA$-bimodule via $\pi$, we write $M_\pi$.

Since $\omega$ is not a zero divisor (a regular element of $\cA$), it defines an automorphism $\mu$ of $\cA$, by $a\omega = \omega \mu(a)$. The definition of $\cB$ is encoded in the following short exact sequence of $\cA$-bimodule maps
\begin{equation}\label{exact.seq}
\xymatrix{0  \ar[r] & \cA^{\mu^{-1}}\ar[r]^-{r_\omega} & \cA \ar[r] & \cB_\pi\ar[r] & 0,}
\end{equation}
where $r_\omega$ denotes the right multiplication by $\omega$. Since  $\ext i {\cA^e} {\cA} {\cA^e} =0$ unless $i=d+1$ in which case $\ext d {\cA^e} {\cA} {\cA^e} =\cA^{\nu}$,  the application of $\ext i  {\cA^e} {-} {\cA^e}$ to \eqref{exact.seq} yields a long exact sequence, whose nontrivial part is
$$
\xymatrix{0 \ar[r] & \ext {d+1}  {\cA^e} {\cB_\pi} {\cA^e} \ar[r] & \cA^{\nu}  \ar[r]^-{l_\omega} 
&  \cA^{\nu\circ \mu} \ar[r] & \ext {d+2}  {\cA^e} {\cB_\pi} {\cA^e} \ar[r] & 0,}
$$
where $l_\omega$ is the left multiplication by $\omega$. Since $l_\omega$ is a monomorphism and $\coker\ l_\omega = \cB$ we obtain
$$
\ext l  {\cA^e} {\cB_\pi} {\cA^e} = \begin{cases}
0, & \mbox{if $l\neq d+2$}, \\
\cB_\pi^{\nu\circ \mu}, & \mbox{if $l=d+2$}.
\end{cases}
$$
Note that $\cB^e = \cB\ot \cA^{op}/\cB\ot \cA^{op} (1\ot \omega)$ and $\cB\ot \cA^{op} = \cA^e/\cA^e(\omega \ot 1)$. Both $1\ot \omega$ and $\omega \ot 1$ are normal regular elements, inducing automorphisms $\id_\cB \ot \mu^{-1}$ and $\mu \ot \id_{\cA^{op}}$, respectively. This allows one to use the twisted version of the Rees lemma (see \cite[Lemma~1.2]{AjiSmi:inj} or \cite[Section~3.4]{Lev:som}) to compute the following chain of isomorphisms of $\cA$-bimodules
$$
\ext l {\cB^e} {\cB} {\cB^e}_\pi \cong {}^\mu \ext {l+1} {\cB\ot \cA^{op}} {\cB} {\cB\ot \cA^{op}} \cong {}^\mu \ext {l+2} {\cA^e} {\cB_\pi} {\cA^e}^{\mu^{-1}},
$$
where $\cB$ and its modules are always understood as  $\cA$-modules via the map $\pi$ and the indicated automorphisms of $\cA$. Therefore,
\begin{equation}\label{Gor}
\ext l {\cB^e} {\cB} {\cB^e}_\pi \cong  \begin{cases}
0, & \mbox{if $l\neq d$}, \\
{}^\mu \cB^{\nu}_\pi \cong \cB^{\nu\circ \mu^{-1}}_\pi, & \mbox{if $l=d$},
\end{cases}
\end{equation}
as $\cA$-bimodules. Let us write $\theta: \ext d {\cB^e} {\cB} {\cB^e}_\pi \to  \cB^{\nu\circ \mu^{-1}}_\pi$, for the isomorphism \eqref{Gor}. Since $\pi$ is an epimorphism, $\theta$ is an isomorphism of left $\cB$-modules $\ext d {\cB^e} {\cB} {\cB^e}\to  \cB$. The right $\cB$-module structures induce an endomorphism
$$
\kappa : \cB \to \cB, \qquad b\mapsto  \theta (\theta^{-1}(1) b),
$$
that makes $\theta$ an isomorphism of $\cB$-bimodules $\ext d {\cB^e} {\cB} {\cB^e}\to  \cB^\kappa$, as required.
\end{proof}

Note in passing that assumption \eqref{hom.as} is satisfied by {\em rigid Gorenstein} \cite{BroZha:dua}, hence in particular by {\em twisted Calabi-Yau} algebras \cite{Gin:Cal} of dimension $d+1$. 

\proofof{Proposition~\ref{prop.sus.a}}
Consider $\Weyl \tp q 0$, where $q=q_-q_+$,  and 
iterated skew polynomial  algebra over $\Weyl \tp q 0$, 
$$
\cA := \Weyl \tp q 0 [z_-;\sigma_-][z_+;\sigma_+],
$$
where $\sigma_-$ is an automorphism of $\Weyl \tp q 0$ and $\sigma_+$ of $\Weyl \tp q 0 [z_-;\sigma_-]$,
given by 
$$
\sigma_\pm(z)=z, \quad \sigma_\pm(x)=q_\pm x, \quad \sigma_\pm(y) = q_\pm^{-1}y, \quad \sigma_+(z_-)=z_-.
$$
Let $\omega = z_-z_+-z$. Note that $\omega$ is a normal regular element of $\cA$  and that  $\AWeyl p {q_\pm} \cong \cA/\cA\omega$ (the isomorphism sends $x_-$ to the class of $x$ and $x_+$ to the class of $y$). 

Since $\tp$ has no repeated roots, $\Weyl \tp q 0$ is a twisted Calabi-Yau algebra of dimension 2 by \cite[Theorem~4.5]{Liu:hom}. By \cite[Theorem~0.2]{LiuWan:twi} $\cA$ is a twisted Calabi-Yau algebra of dimension 4, which, in particular means that $\cA$ satisfies assumption \eqref{hom.as} with $d=3$, and the assertion follows by Lemma~\ref{lemma.from}.
\endproof

\appendix
\section{Strongly graded algebras and exact sequences of Abelian groups}
The notion of a  $\ZZ$-graded algebra is a special case of the notion of a group-graded algebra. Let $G$ be a group. A {\em $G$-graded algebra} $\cA$ decomposes into a direct sum of subspaces $\cA_g$  labelled by $g\in G$ such that $\cA_g\cA_h\subseteq A_{gh}$, for all $g,h\in G$. In case $\cA_g\cA_h= A_{gh}$, for all $g,h\in G$, $\cA$ is said to be {\em strongly $G$-graded}. We will write $|a |_G$ for the $G$-degree of $a\in \cA$.

Let $\cA = \oplus_{g \in G} \cA_g$ be a $G$-graded algebra. Any group epimorphism $\pi: G\to H$ induces an $H$-grading on $\cA$ by setting, for all $h\in H$,
$$
\cA_h = \bigoplus_{g\in \pi^{-1}(h)} \cA_g. 
$$
Given a group monomorphism $\varphi: K\to G$, one can define a $K$-graded algebra  
$$
\cA^{(K)} := \bigoplus_{k\in K} \cA_{\varphi(k)}.
$$

The aim of this appendix is to prove the following lemma, which supplements \cite[Section~A.I.3.1.b]{NasVan:gra}.
\begin{lemma}\label{lemma.tower}
Given a short exact sequence of Abelian groups,
$$
\xymatrix{ 0 \ar[r] & K \ar[r]^\varphi & G \ar[r]^\pi & H \ar[r] & 0,}
$$
a $G$-graded algebra $\cA$ is strongly graded if  and only if  the  induced $H$-grading on $\cA$  and $K$-grading on $\cA^{(K)}$ are strong. 
\end{lemma}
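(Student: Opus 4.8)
The plan is to prove the two directions separately, using the characterization of strong gradings in terms of the existence of ``partition-of-unity'' elements: a $G$-graded algebra $\cA$ is strongly graded if and only if $1 \in \cA_g \cA_{g^{-1}}$ for every $g \in G$ (equivalently, for a generating set of $G$). This follows from the discussion in Section~\ref{sec.pre.line} adapted to an arbitrary group: if $1 \in \cA_g\cA_{g^{-1}}$ and $1 \in \cA_h \cA_{h^{-1}}$ then $\cA_{gh}\cA_{(gh)^{-1}} \supseteq \cA_{gh}\cA_{h^{-1}}\cA_{g^{-1}} \supseteq \cA_g(\cA_h\cA_{h^{-1}})\cA_{g^{-1}} \ni 1$, and then $\cA_{ab} = \cA_a(\cA_{b}\cA_{b^{-1}})\cA_b \subseteq \cA_a\cA_b$ for all $a,b$, so the condition on a generating set suffices. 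Throughout I will use additive notation for the Abelian groups $K, G, H$ and write the exact sequence as $0 \to K \xrightarrow{\varphi} G \xrightarrow{\pi} H \to 0$.

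For the ``only if'' direction, suppose $\cA$ is strongly $G$-graded. For the induced $H$-grading, given $h \in H$ pick $g \in \pi^{-1}(h)$; then $\cA_h \supseteq \cA_g$, $\cA_{-h} \supseteq \cA_{-g}$, and $1 \in \cA_g\cA_{-g} \subseteq \cA_h\cA_{-h}$, so the $H$-grading is strong. For the $K$-grading on $\cA^{(K)} = \bigoplus_{k\in K}\cA_{\varphi(k)}$, given $k \in K$ we have $\cA^{(K)}_k \cA^{(K)}_{-k} = \cA_{\varphi(k)}\cA_{-\varphi(k)}$; since $\cA$ is strongly $G$-graded this contains $1$, and as it is a subspace of $\cA_0 = \cA^{(K)}_0$, we get $1 \in \cA^{(K)}_k\cA^{(K)}_{-k}$, so the $K$-grading on $\cA^{(K)}$ is strong.

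For the ``if'' direction, assume both the $H$-grading on $\cA$ and the $K$-grading on $\cA^{(K)}$ are strong; I want $1 \in \cA_g\cA_{-g}$ for every $g \in G$. Fix $g$, set $h = \pi(g)$. Strongness of the $H$-grading gives elements $\sum_i a_i \otimes b_i$ with $a_i \in \cA$ of $H$-degree $h$, $b_i \in \cA$ of $H$-degree $-h$, and $\sum_i a_i b_i = 1$. Decomposing into $G$-homogeneous components, I may assume each $a_i$ is $G$-homogeneous of some degree $g_i$ with $\pi(g_i) = h$, and each $b_i$ is $G$-homogeneous of degree $g_i'$ with $\pi(g_i') = -h$; keeping only the terms $a_ib_i$ of $G$-degree $0$, i.e.\ those with $g_i' = -g_i$, still yields sum $1$ (since $1$ has $G$-degree $0$). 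Now $g_i - g$ lies in $\ker\pi = \varphi(K)$, so $a_i \in \cA_{g_i} = \cA_{g + \varphi(k_i)}$ for some $k_i \in K$; then $\cA_g \cA_{\varphi(k_i)} \ni$? — more carefully, $\cA_g \cdot \cA_{\varphi(-k_i)} \subseteq \cA_{g - \varphi(k_i)}$ is not quite what is needed, so instead I write $a_i b_i \in \cA_{g_i}\cA_{-g_i}$ and note $\cA_{g_i}\cA_{-g_i} = \cA_{g + \varphi(k_i)}\cA_{-g-\varphi(k_i)}$. Using strongness of the $K$-grading on $\cA^{(K)}$, pick $\sum_j c_j \otimes d_j \in \cA_{\varphi(k_i)}\otimes \cA_{-\varphi(k_i)}$ (inside $\cA^{(K)}$) with $\sum_j c_j d_j = 1 \in \cA_0$. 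Then for each $i$,
\[
a_i b_i = a_i \Bigl(\sum_j c_j d_j\Bigr) b_i = \sum_j (a_i c_j)(d_j b_i),
\]
where $a_i c_j \in \cA_{g_i}\cA_{\varphi(k_i)} = \cA_{g_i + \varphi(k_i)}$ — hmm, this pushes the degree the wrong way. The fix: instead insert the partition of unity for $-k_i$. Concretely, choose $\sum_j c_j \otimes d_j \in \cA_{-\varphi(k_i)} \otimes \cA_{\varphi(k_i)}$ with $\sum_j c_j d_j = 1$; then $a_i c_j \in \cA_{g_i - \varphi(k_i)} = \cA_g$ and $d_j b_i \in \cA_{\varphi(k_i) - g_i} = \cA_{-g}$, so each $(a_i c_j)(d_j b_i) \in \cA_g\cA_{-g}$, and summing over $i,j$ gives $\sum_{i} a_i b_i = 1 \in \cA_g\cA_{-g}$, as desired. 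Since $g$ was arbitrary, $\cA$ is strongly $G$-graded.

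The only real subtlety — and the step I expect to require the most care in the written proof — is the bookkeeping in the ``if'' direction: decomposing the $H$-homogeneous partition of unity into $G$-homogeneous pieces, discarding the wrong-degree cross terms, and then correctly inserting the $K$-grading partition of unity (for $-k_i$, not $k_i$) so that the degrees telescope back to $g$ and $-g$. Everything else is a direct application of the ``$1 \in \cA_g\cA_{-g}$ for all $g$'' criterion, which itself needs the short multiplicativity argument recalled at the start; I would state that criterion as a preliminary remark (or cite \cite[Section~A.I.3.1.b]{NasVan:gra}) and then the proof is as above.
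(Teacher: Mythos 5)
Your proposal is correct and follows essentially the same route as the paper: the ``if'' direction is proved by exactly the same degree-bookkeeping, namely writing $|a_i|_G = g+\varphi(k_i)$ via exactness and inserting a partition of unity from the strong $K$-grading in degree $-k_i$ to telescope the $G$-degrees back to $g$ and $-g$. The only cosmetic differences are that the paper simply cites \cite[Section~A.I.3.1.b]{NasVan:gra} for the ``only if'' direction, which you prove directly, and that you are slightly more explicit about reducing to $G$-homogeneous $a_i, b_i$ and discarding the cross terms.
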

\begin{proof}
By  \cite[Section~A.I.3.1.b]{NasVan:gra}, if $\cA$ is strongly $G$-graded then also the induced graded algebras are strongly graded.

In the converse direction, let us take any $g\in G$. Since the $H$-grading on $\cA$ induced by $\pi$ is strong,
there exist 
$a_j, b_j\in \cA$ such that
$$
|a_j|_H = \pi(g) 
 \quad \mbox{and} \quad \sum_j a_jb_j =1.
$$
The exactness of the sequence implies that there exist $k_j\in K$, such that
$$
|a_j|_G = g + \varphi(k_j).
$$
Since  the $\varphi$ induced $K$-grading is strong, there exist $a_{ij}, b_{ij}\in \cA $ such that
$$
|a_{ij}|_K = -k_j
\quad \mbox{and} \quad \sum_i a_{ij}b_{ij} =1.
$$
Note that the first condition above means that $|a_{ij}|_G = -\varphi(k_j)$, hence 
$$
|a_ja_{ij}|_G =  |a_j|_G + |a_{ij}|_G = g \quad \mbox{and} \quad \sum_{i,j} a_ja_{ij}b_{ij}b_j = \sum_j a_jb_j =1,
$$
which implies that $\cA$ is a strongly $G$-graded algebra, as required.
\end{proof}

In the context of algebras discussed in the main body of this paper, one can take $\cA = \AWeyl p {q_\pm}$,  $G=K = \ZZ$, $K = \ZZ/k\ZZ$, the epimorphism $\pi: \ZZ\to \ZZ/k\ZZ$, $n\mapsto n \!\mod \! k$, and its kernel  monomorphism $\varphi: \ZZ \to \ZZ$, $n\mapsto kn$. Then the $\ZZ$-graded subalgebra induced  by $\varphi$ is $\cA_H = {\AWeyl p {q_\pm}^{(k)}}$. Since the $\ZZ$-grading of $\AWeyl p {q_\pm}$ is not strong (see Remark~\ref{rem.nonstrong}) and the $\ZZ$-grading of $\AWeyl p {q_\pm}^{(k)}$ is strong by Theorem~\ref{thm.main}, Lemma~\ref{lemma.tower} implies that the $\ZZ/k\ZZ$-grading of $\AWeyl p {q_\pm}$ is not strong.

\end{document}